\begin{document}

\parskip=\smallskipamount

\newtheorem{theorem}{Theorem}[section]
\newtheorem{lemma}[theorem]{Lemma}
\newtheorem{corollary}[theorem]{Corollary}
\newtheorem{proposition}[theorem]{Proposition}

\theoremstyle{definition}
\newtheorem{definition}[theorem]{Definition}
\newtheorem{example}[theorem]{Example}
\newtheorem{xca}[theorem]{Exercise}
\newtheorem{problem}[theorem]{Problem}
\newtheorem{remark}[theorem]{Remark}
\newtheorem{question}[theorem]{Question}
\newtheorem{conjecture}[theorem]{Conjecture}

\newcommand{\B}{\mathbb{B}}
\newcommand{\C}{\mathbb{C}}
\newcommand{\D}{\mathbb{D}}
\newcommand{\N}{\mathbb{N}}
\newcommand{\Q}{\mathbb{Q}}
\newcommand{\Z}{\mathbb{Z}}
\renewcommand{\P}{\mathbb{P}}
\newcommand{\R}{\mathbb{R}}
\newcommand{\T}{\mathbb{T}}
\newcommand{\U}{\mathbb{U}}

\newcommand{\cA}{\mathcal{A}}
\newcommand{\cB}{\mathcal{B}}
\newcommand{\cR}{\mathcal{R}}
\newcommand{\cC}{\mathcal{C}}
\newcommand{\cD}{\mathcal{D}}
\newcommand{\cF}{\mathcal{F}}
\newcommand{\cH}{\mathcal{H}}
\newcommand{\cL}{\mathcal{L}}
\newcommand{\cN}{\mathcal{N}}
\newcommand{\cO}{\mathcal{O}}
\newcommand{\cS}{\mathcal{S}}
\newcommand{\cZ}{\mathcal{Z}}
\newcommand{\cP}{\mathcal{P}}
\newcommand{\cT}{\mathcal{T}}
\newcommand{\cU}{\mathcal{U}}
\newcommand{\cX}{\mathcal{X}}
\newcommand{\cW}{\mathcal{W}}

\def\di{\partial}
\def\rank{\mathrm{rank}\,}

\numberwithin{equation}{section}
\setcounter{section}{0}

\author{Dejan Kolari\v{c}}
\address{Institute of Mathematics, Physics and Mechanics,
University of Ljubljana, Jadranska 19, 1000 Ljubljana, Slovenia}
\email{dejan.kolaric@fmf.uni-lj.si}
\thanks{Work on this paper was supported by ARRS, Republic of Slovenia.}

\subjclass[2000]{32M17, 32E10, 32H02, 32Q28}
\date{\today}
\keywords{approximation, transversality, homotopy principle, immersion, Stein manifolds}

\begin{abstract}
We prove the parametric homotopy principle for holomorphic immersions of Stein manifolds into Euclidian space and the homotopy principle with approximation on holomorphically convex sets. We write an integration by parts like formula for the solution $f$ to the problem $Lf|_\Sigma=g$, where $L$ is a holomorphic vector field, semi-transversal to analytic variety $\Sigma$.
\end{abstract}

\title{Parametric H-principle for holomorphic immersions with approximation}

\maketitle

\section{Introduction and results}

A $\cC^1$ map $f:X\to Y$ between smooth manifolds $X$ and $Y$ is an immersion if $\theta=df:TX\to TY$ is injective.
It was shown by Hirsch \cite{Hi} that given an injective bundle homomorphism $\theta: TX\to TY$ there exists a
continuous homotopy of injective homomorphisms $\theta_t:TX\to TY$ with $\theta_0=\theta$ and $\theta_1=df$ for
some immersion $f: X\to Y$. Following \cite{GR1} we say that smooth immersions $X\to Y$ satisfy h-principle.

This result cannot be generalized to holomorphic maps between arbitrary complex manifolds. For example there can be a lot
of injective bundle maps $TX\to T\C^N$, where $X$ is a compact complex manifold, but there are no holomorphic immersions $X\to \C^N$ when $\dim X\geq 1$, since the only compact analytic sets in $\C^N$ are finite.
Additionaly, if $X=\C^n$, there are examples of Kobayashi hyperbolic manifolds $Y$ such that every holomorphic map $\C^n\to Y$ has rank strictly less than $n$ everywhere.
However, immersions of Stein manifolds into $\C^N$ satisfy the following holomorphic homotopy principle by Eliashberg-Gromov (\textsection{2.1.5}, \cite{GR1}):

{\it Suppose that the cotangent bundle $T^\ast X$ of a Stein manifold $X$ with $\dim X=n$ is generated
by $q>n$ holomorphic $(1,0)$-forms $\varphi_1,\ldots,\varphi_q$. Then $q$-tuple
$\varphi=(\varphi_1,\ldots,\varphi_q)$ can be changed (through
$q$-tuples generating $T^\ast X$) to $(df_1,\ldots, df_q)$ where $(f_1,\ldots, f_q):X\to \C^q$ is a holomorphic immersion.}

The sketch of the proof is included in \cite{GR1}, however a lot of technical details are missing. The main idea described in \cite{GR1} is using a solution to the problem $Lf|_\Sigma=g$ to replace forms one by one by differentials. Suppose $\varphi_q$ is being replaced by a differential. Then $\Sigma\subset X$ would be an analytic set where forms $\varphi_1,\ldots,\varphi_{q-1}$ fail to span $T^\ast X$, $L:X\to TX$ would be a holomorphic vector field with  $\varphi_1 L=0,\ldots,\varphi_{q-1} L=0$ on $\Sigma$,
 $g=\varphi_q L$ a holomorphic function and $f$ an unknown holomorphic function. Let $\varphi_{q,t}=(1-t)\varphi_q+t df$.
The desired homotopy through $q$-tuples spanning $T^\ast X$ would be then $(\varphi_1,\ldots,\varphi_{q-1},\varphi_{q,t})$ since $\varphi_{q,t} L=\varphi_q L\neq 0$ on $\Sigma$.

To solve the above problem the semi-transversality condition $L\pitchfork_{\text semi} \Sigma$ (see definition \ref{def_semitrans}) introduced in \cite{GR1} is needed.
In section 4 we show in detail how to obtain this condition for $\cL$ and $\Sigma'$ which appear in
the proof of lemma $A''_4$ in \cite{GR1}, p. 69.
We use the jet transversality theorem for holomorphic maps to reduce the set of
non semi-transversal points by obtaining semi-transversality on the complements of analytic sets
and making sure we do not get any new non semi-transversal points (see lemma \ref{nontrans_reduct}).

We additionaly prove the homotopy principle with approximation (theorem \ref{main}) by writing down a formula for the solution $f$ of $Lf|_\Sigma=g$ (theorem \ref{solve_lf_a}) and observing that given a holomorphically convex compact set $K\subset X$ we get
 $|f|_K <M |g|_K$, where $M$ depends only on the restriction of data $L, \Sigma$ on $K$. Here we additionally allow small perturbations of $\Sigma$ and $L$.

We show how the  proof of theorem \ref{main} can be generalized to families of initial (1,0)-forms and use
the formula (see theorem \ref{solve_lf_a_param}) to establish the parametric homotopy principle (theorem \ref{main_parametric}). As a corollary of this and the Oka principle for sections of bundles we show (corollary \ref{homot_immers}) that if differentials of two immersions $f_1,f_2: X\to \C^q$  are homotopic through $q$-tuples of linearly independent forms, then immersions $f_1$ and $f_2$ are homotopic through immersions.

\begin{theorem} \label{main}
Let $X$ be a Stein manifold and let $q>n=\dim X\geq 1$. Let
$\varphi=(\varphi_1,\ldots,\varphi_q):X\to (T^\ast X)^q$ be
$(1,0)$-forms on Stein manifold $X$ which are holomorphic on a compact,
holomorphically convex set $K\subset X$.
Let $\epsilon>0$. Suppose that $\rank \varphi(x)=n$ for all
$x\in X$. Let $g=(g_1,\ldots, g_q)$ be functions, holomorphic
on $K$ such that $\varphi=dg$ on $K$.

 Then there exists a
continuous homotopy $H:X\times [0,1]\to (T^\ast X)^q$
such that
\begin{enumerate}
\item[(i)]
for each $t\in[0,1]$ are $(1,0)$-forms $H(\cdot,t)$ holomorphic on $K$, $H(\cdot,0)=\varphi$ and $H(\cdot,1)=df$
for some holomorphic functions $f=(f_1,\ldots,f_q):X\to \C^q$
satisfying $|f-g|_K<\epsilon$,
\item[(ii)] $\rank H(x,t)=n$ for all $(x,t)\in X\times [0,1]$.
\end{enumerate}
\end{theorem}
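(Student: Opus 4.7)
The plan is to replace the forms $\varphi_q,\varphi_{q-1},\ldots,\varphi_1$ one at a time by holomorphic differentials, invoking at each stage the solution formula of theorem~\ref{solve_lf_a} together with the semi-transversality adjustment from lemma~\ref{nontrans_reduct}. First, a preliminary Oka--Weil step will replace $\varphi$ by a global holomorphic $q$-tuple $\psi$ of rank $n$ on $X$ that is close to $\varphi$ on $K$, joined to $\varphi$ by a short rank-$n$ homotopy.

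At the $j$-th iterative step the running tuple will be $(\psi_1,\ldots,\psi_j,df_{j+1},\ldots,df_q)$, equal on $K$ to $(dg_1,\ldots,dg_j,df_{j+1},\ldots,df_q)$. I would let $\Sigma_j\subset X$ denote the analytic set where the $q-1$ forms obtained by deleting $\psi_j$ drop rank. Because the running tuple still has rank $n$, the form $\psi_j$ must supply the missing direction on $\Sigma_j$, and using Cartan's Theorem~B one constructs a holomorphic vector field $L_j$ on $X$ annihilating the remaining $q-1$ forms along $\Sigma_j$ with $\psi_j L_j\neq 0$ on $\Sigma_j$. Lemma~\ref{nontrans_reduct} would then perturb $L_j$ (and, if necessary, $\Sigma_j$) by small holomorphic modifications to secure $L_j\pitchfork_{\text{semi}}\Sigma_j$ while not creating new bad points.

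Next I would apply theorem~\ref{solve_lf_a} to solve $L_j f_j = \psi_j L_j$ on $\Sigma_j$. To enforce the approximation, write $f_j=\tilde g_j+h_j$, where $\tilde g_j$ is a holomorphic extension of $g_j$ to $X$ approximating $g_j$ on $K$ (Oka--Weil), and apply the formula to $L_j h_j=\psi_j L_j-L_j\tilde g_j$ on $\Sigma_j$. Since $L_j g_j=\varphi_j L_j$ on $K\cap\Sigma_j$ (because $dg_j=\varphi_j$ there), while $\psi_j$ is close to $\varphi_j$ and $\tilde g_j$ to $g_j$ on $K$, the right-hand side is arbitrarily small on $K\cap\Sigma_j$; the $K$-estimate $|h_j|_K\leq M_j\,|L_j h_j|_{K\cap\Sigma_j}$ of theorem~\ref{solve_lf_a}, whose constant $M_j$ depends only on the restriction of $L_j,\Sigma_j$ to $K$, then forces $|f_j-g_j|_K<\epsilon/q$. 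The straight-line homotopy
\[
H_j(x,t)=\bigl(\psi_1,\ldots,\psi_{j-1},\,(1-t)\psi_j+t\,df_j,\,df_{j+1},\ldots,df_q\bigr)(x)
\]
has rank $n$ for every $t$: off $\Sigma_j$ the remaining $q-1$ forms already span, and on $\Sigma_j$ one has $\bigl((1-t)\psi_j+t\,df_j\bigr)L_j=\psi_j L_j\neq 0$ for all $t$.

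Concatenating the preliminary homotopy with $H_q,H_{q-1},\ldots,H_1$ and rescaling $t$ delivers the required $H$. The hard part will be orchestrating semi-transversality and approximation at the same time: one needs lemma~\ref{nontrans_reduct} to be applied $q$ times without disturbing the already-normalized data, and one must know that $M_j$ depends only on the data over $K$, so that the $q$ successive small perturbations aggregate to an error below $\epsilon$ — this is precisely the quantitative feature of theorem~\ref{solve_lf_a} emphasized in the introduction.
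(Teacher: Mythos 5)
Your proposal follows essentially the same route as the paper: a preliminary Oka-principle step producing a global rank-$n$ tuple close to $\varphi$ on $K$, followed by replacing the forms one at a time via the vector field $L_j$, the semi-transversality perturbations of lemma~\ref{nontrans_reduct}, the decomposition $f_j=\tilde g_j+h_j$ with the quantitative estimate of theorem~\ref{solve_lf_a}, and the straight-line homotopy whose rank is preserved because $((1-t)\psi_j+t\,df_j)L_j=\psi_j L_j\neq 0$ on $\Sigma_j$. The only points the paper makes explicit that you leave implicit are that the initial rank-$n$ homotopy requires the Oka principle for sections of the bundle avoiding the rank-dropping variety (not just Oka--Weil), and that the semi-transversality perturbations are exact, hence do not destroy the already-converted differentials.
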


\begin{remark} {\it Case $n=1$.}

When $X$ is 1-dimensional Stein manifold (open Riemann
surface), the set $\Sigma=\{x\in X: \rank
(\varphi_1,\ldots,\varphi_{q-1})<1\}$ is a discrete set. By
\cite{F0}, Theorem 2.1, the problem $Lf|_\Sigma=g$ is solvable.
Therefore immersions $X\to \C^q$ satisfy homotopy principle for
all $q\geq 1$ (case $n=q=1$ is described in \cite{F0}, Theorem
2.5).

 {\it Case when $\dim X=q>1$ is still an open problem; method described in \cite{GR1} fails since one form is removed and the remaining forms should generate $T^\ast X$ somewhere.  However immersions $X\to \C^n$, where $X\subset \C^n$ is a contractible domain, satisfy homotopy principle and the obtained homotopy class consists of one element:}

Suppose $X=\C^n$.
Let $\varphi=(\varphi_1,\ldots,\varphi_n):
\C^n\to (T^\ast \C^n)^n$ be holomorphic $(1,0)$-forms with
$\rank \varphi=n$. There is a homotopy from $\varphi$ to
$df=(dz_1,\ldots, dz_{n-1}, dz_n)$, satisfying (ii) in the main theorem:

Let $A=(a_j^k)_{j,k}:\C^n\to \C^{n^2}$ be a (unique)
holomorphic map such that $\sum_{j=1}^n a_j^k \varphi_j = dz_j$
for $j=1,\ldots,n$. Let $G:\C^n\times
[0,1]\to {\text GL}_n(\C)\subset \C^{n^2}$ be a homotopy of
holomorphic maps from identity to $A$. A continuous homotopy
exists since $X$ is contractible and ${\text GL}_n(\C)$ connected.
By Oka principle \cite{F4} for sections of the bundle $(X\times \C^{n^2})\backslash \Sigma\to X$,
where fiber $\C^{n^2}\backslash \Sigma_x={\text GL}_n(\C)$ is homogeneous (hence it satisfies CAP), there exists a homotopy
$H':X\times [0,1]\to {\text GL}_n(\C)$ of holomorphic sections from $A$ to identity.
Set $H(x,t)=H'(x,t)\cdot \varphi(x)$.

{\it Question.} How to construct a homotopy when $X=\C^n\backslash \{0\}$?
\end{remark}

\begin{theorem}[parametric homotopy principle] \label{main_parametric}
Suppose $\varphi:X\times \Omega\to (T^\ast X)^q$ is a holomorphic map with $\rank \varphi(x,s)=n$
on $X\times \Omega$, where the parameter space $\Omega$ is a Stein manifold. Then there exists a continuous map $H:X\times \Omega \times [0,1]\to (T^\ast X)^q$, holomorphic in $(x,s)\in X\times \Omega$ with $H(x,s,0)=\varphi(x,s)$, $H(x,s,1)=df_s(x)$, and $H(\cdot,s,\cdot)$
satisfies (ii) in theorem \ref{main} for all $s\in \Omega$.
\end{theorem}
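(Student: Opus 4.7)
The plan is to adapt the proof of Theorem \ref{main} by carrying the parameter $s \in \Omega$ through every step of the argument. Since $\Omega$ is Stein, $X \times \Omega$ is a Stein manifold, and $\varphi$ may be treated as a family of $(1,0)$-forms on $X$ depending holomorphically on $(x,s)$. The forms $\varphi_q, \varphi_{q-1}, \ldots, \varphi_1$ will be replaced one at a time by differentials $d_x f_j(x,s)$, yielding successive linear homotopies whose concatenation produces $H$.

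For a single replacement step, say of $\varphi_q$, let $\Sigma \subset X \times \Omega$ be the analytic subvariety on which the truncated tuple $(\varphi_1,\ldots,\varphi_{q-1})(\cdot,s)$ fails to span $T_x^\ast X$. I would construct a holomorphic vector field $L(x,s) \in T_x X$ depending holomorphically on $s$ such that $\varphi_j L = 0$ for $j < q$ and $\varphi_q L \not\equiv 0$ along $\Sigma$, and set $g := \varphi_q L$. The parametric formula (Theorem \ref{solve_lf_a_param}) then produces a holomorphic $f: X \times \Omega \to \C$ with $L f|_\Sigma = g$ and a uniform bound $|f|_K < M|g|_K$ on any holomorphically convex compact $K$. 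The linear homotopy $\varphi_{q,t} := (1-t)\varphi_q + t\, d_x f$ preserves rank $n$: on $\Sigma$ because $\varphi_{q,t} L = \varphi_q L \neq 0$ by semi-transversality, and off $\Sigma$ because the first $q-1$ forms already span $T^\ast X$.

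Iterating this procedure for $j = q, q-1, \ldots, 1$ and concatenating the partial homotopies yields $H$ locally. To globalize, I would exhaust $X \times \Omega$ by holomorphically convex compacts $K_1 \subset K_2 \subset \cdots$ and patch successive approximations using the uniform bound $|f|_K < M|g|_K$, exactly as in the approximation step of Theorem \ref{main}.

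The principal obstacle is establishing the semi-transversality condition $L \pitchfork_{\mathrm{semi}} \Sigma$ uniformly over all parameter values. Following the method of Section 4, one applies jet transversality on the total space $X \times \Omega$ to reduce the non-semi-transversal locus to a proper analytic subvariety, and uses the freedom to perturb $L$ and $\Sigma$ slightly---allowed by Theorem \ref{solve_lf_a_param}---to avoid introducing new non-transversal points. A secondary difficulty is preserving the rank-$n$ condition uniformly in $s$ under the patching procedure; this is handled by choosing each perturbation small relative to the minimum singular value of the current tuple of forms on the relevant compact $K_m$. Once semi-transversality and the patching are in place, the parametric construction of $H$ runs in parallel with the non-parametric argument, and the holomorphic dependence on $s$ is automatic from the fact that $f$ is produced by the parametric formula of Theorem \ref{solve_lf_a_param}.
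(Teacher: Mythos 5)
Your proposal follows essentially the same route as the paper: treat $X\times\Omega$ as the total Stein space, obtain semi-transversality of the family $L_s\pitchfork_{\text{semi}}\Sigma_s$ by applying jet transversality to generic perturbations $F:X\times\Omega\to\C$ on the total space, and invoke Theorem \ref{solve_lf_a_param} to produce the solution of $L_sf_s|_{\Sigma_s}=g_s$ holomorphically in $s$, after which the linear homotopies and their concatenation work exactly as in Theorem \ref{main}. The only cosmetic difference is your suggested exhaustion-and-patching for globalization, which the paper does not need for the solution step (Theorem \ref{solve_lf_a_param} is already global via Cartan theory), though an exhaustion does appear in the countable semi-transversality reduction of Proposition \ref{hom_semi_trans}.
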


\begin{remark}
For applications the existence of continuous homotopy $\varphi:X\times \Omega\to T^\ast X$ satisfying the rank condition (ii) in
theorem \ref{main} is usually sufficient since it implies the existence of holomorphic
homotopy satisfying the same condition with additional interpolation and approximation of given $\varphi$:

 Homotopy is a section of the bundle $\pi \times {\text id}: (T^\ast X)^q\backslash A\times \Omega\to X\times \Omega$, where $A$ is an analytic variety describing $q$-tuples of forms with $rank<n$ . Fiber of the bundle is the complement of algebraic variety $\{\lambda \in \C^{n q}: \rank \lambda <n\}$ and has codimension $q-n+1\geq 2$, hence Oka principle for sections (theorem 1.1 in \cite{F4}) applies.
\end{remark}

\begin{corollary} \label{homot_immers}
Let $G:X\times [0,1] \to (T^\ast X)^q$ be a continuous homotopy of holomorphic forms satisfying (ii) in theorem \ref{main} and $G(\cdot,0)=df_1$, $G(\cdot,1)=df_2$, where $f_1,f_2:X\to \C^q$ are holomorphic
immersions. Then there exists a continuous homotopy $F:X\times [0,1]\to \C^q$ of holomorphic immersions such that
$F(\cdot,0)=f_1$, $F(\cdot,1)=f_2$.
\end{corollary}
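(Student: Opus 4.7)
The plan is to turn $G$ into a holomorphic family of $q$-tuples over a Stein parameter space, feed that family into the parametric h-principle (theorem \ref{main_parametric}), and exploit interpolation at the two parameter endpoints to match $f_1$ and $f_2$.

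First I would upgrade $G$ to a holomorphic family. Let $V\subset\C$ be a Stein neighborhood of $[0,1]$, e.g.\ a disc. Consider the bundle $B=(T^\ast X)^q\setminus A\to X$ appearing in the remark after theorem \ref{main_parametric}, where $A$ is the analytic variety of $q$-tuples of rank less than $n$; its fiber has codimension $q-n+1\geq 2$ in $\C^{nq}$ and is therefore Oka. The given map $G$ is a continuous section of the pullback of $B$ to $X\times[0,1]$ which is already holomorphic over $X\times\{0,1\}$, where it equals $df_1$ and $df_2$. By the parametric Oka principle with interpolation along $X\times\{0,1\}$ (theorem 1.1 of \cite{F4}), I obtain a holomorphic section $\Phi:X\times V\to B$ with $\Phi(\cdot,0)=df_1$ and $\Phi(\cdot,1)=df_2$, whose restriction to $X\times[0,1]$ is homotopic to $G$ through sections of $B$.

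Next, I would apply theorem \ref{main_parametric} with $\Omega=V$ to this $\Phi$. It produces a continuous $H:X\times V\times[0,1]\to(T^\ast X)^q$, holomorphic in $(x,s)$, and a holomorphic family $\tilde f_s:X\to\C^q$ with $H(\cdot,s,0)=\Phi(\cdot,s)$, $H(\cdot,s,1)=d\tilde f_s$, and $H$ of rank $n$ everywhere. The rank condition makes each $\tilde f_s$ an immersion, so once $\tilde f_0=f_1$ and $\tilde f_1=f_2$ have been arranged, the restriction $F(x,s):=\tilde f_s(x)$ to $s\in[0,1]$ is the sought homotopy of immersions.

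The hard part will be achieving precisely this endpoint interpolation. I would address it by inspecting the proof of theorem \ref{main_parametric}: the parametric form-replacement procedure reduces at each step to solving $Lf|_\Sigma=g$, and by the explicit formula of theorem \ref{solve_lf_a_param}, the solution $f$ vanishes identically in $x$ whenever the datum $g$ vanishes identically. Since $\Phi(\cdot,s)$ is already a differential at $s\in\{0,1\}$, the obstruction $g$ at every inductive step vanishes identically in $x$ at those parameter values, so no modification is performed there and $\tilde f_s$ coincides with $f_1$ at $s=0$ and with $f_2$ at $s=1$, completing the construction of $F$.
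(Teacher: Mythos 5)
Your first two steps are essentially the paper's: holomorphize $G$ over a Stein neighborhood of $[0,1]$ using the Oka principle with interpolation at $s=0,1$ (the paper builds this extension explicitly by making the homotopy constant in $t$ on strips $\{\mathrm{Re}\,t\in[-1/4,1/4]\}\cup\{\mathrm{Re}\,t\in[3/4,5/4]\}$, so that the section is defined and holomorphic \emph{near} $X\times\{0,1\}$ before theorem 1.1 of \cite{F4} is invoked --- you should say how $G$ gets extended from $X\times[0,1]$ to $X\times V$), and then feed the resulting $\Phi=G''$ into theorem \ref{main_parametric}.

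The endpoint-matching step, however, has a genuine gap. You claim that because $\Phi(\cdot,s)$ is already a differential at $s\in\{0,1\}$, ``no modification is performed there'' and hence $\tilde f_0=f_1$, $\tilde f_1=f_2$. This is not what the parametric construction does. In each round $k$ of step (2) there are \emph{two} substeps: (2.1) a generic perturbation $\varphi_j\mapsto\varphi_j+df_s$ of the forms with $j\neq k$, needed to achieve semi-transversality $L\pitchfork_{\text{semi}}\Sigma_s$, and only then (2.2) the replacement of $\varphi_k$ by a differential via $Lf|_{\Sigma_s}=1$. Your vanishing argument addresses only (2.2). The perturbations in (2.1) come from the jet transversality theorem; they are generic and there is no reason for them to vanish at $s=0,1$ (nor can one simply force them into the ideal of $\{0,1\}\subset\Omega$, since semi-transversality of $L_s$ to $\Sigma_s$ is still needed at those parameter values, where $\Sigma_s$ need not be empty). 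Consequently $d\tilde f_0$ is a perturbation of $df_1$, not $df_1$ itself, and $F(x,s)=\tilde f_s(x)$ alone does not have the required endpoints. The paper's fix is exactly the observation you would need: \emph{all} perturbations in the proofs of theorems \ref{main} and \ref{main_parametric} are exact, so the homotopies $t\mapsto H(\cdot,0,t)$ and $t\mapsto H(\cdot,1,t)$ run entirely through differentials of immersions; one therefore takes $F$ to be the concatenation of the three legs corresponding to the path $(0,0)\to(0,1)\to(1,1)\to(1,0)$ in $(s,t)$, the outer two legs supplying homotopies of immersions from $f_1$ to $\tilde f_0$ and from $\tilde f_1$ to $f_2$. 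Without these legs (or some other argument correcting the endpoints) your construction is incomplete.
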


\begin{proof}
We can construct continuous homotopy $G':X\times \C\to (T^\ast X)^q$, holomorphic near $\{0\}\cup \{1\}$, such that
$G'(\cdot,0)=G(\cdot,0)$, $G'(\cdot,1)=G(\cdot,1)$ and $\rank G'=n$ (define homotopy as a constant on
$\{{\text Re}\; t \in [-1/4,1/4]\} \cup \{{\text Re}\; t \in [3/4,5/4]\}\subset \C$). Note that $G'\times {\text id}$ with $\rank G'=n$
is a section of holomorphic bundle $\pi\times {\text id}: (T^\ast X)^q\backslash A\times \C\to X\times \C$ (see the above remark). By theorem 1.1 in \cite{F4} there exists a holomorphic section $G''$ such that $G''|_{\{0,1\}\times X}=G'$.

Now apply theorem \ref{main_parametric} with $\varphi=G''$ to obtain $H:X\times \C\times [0,1]\to (T^\ast X)^q$.
The path consisting of  the straight lines from  $(s,t)=(0,0)$ to $(0,1)$, then from $(0,1)$ to $(1,1)$
and lastly from $(1,1)$ to $(1,0)$, describes the homotopy $F$.
Note that all the perturbations in the proof of theorems \ref{main} and \ref{main_parametric} are exact.
\end{proof}

\section{Proof of main theorem}

\begin{proof}[Proof of Theorem \ref{main}:]
Final homotopy $H$ will be a conjunction of homotopies from steps (1)
and (2).

(1) {\bf Approximation with globally defined holomorphic
forms.}
 Using Oka principle (Lemma
\ref{approx_OKA}) construct a homotopy $H_0$ satisfying (ii)
such that $H_0(\cdot,0)=\varphi$ and $H_0(\cdot,1)$ is
holomorphic on $X$ with $|H_0(\cdot,1)-\varphi|_K<\epsilon_1$.
Here $\epsilon_1$ is chosen small enough (more precisely $(M+1)^q
\epsilon_1<\delta$ and $(M+1)^q \epsilon_1<\epsilon$ where
$M=M(g|_K)$ and $\delta=\delta(g|_K)$ are from theorem
\ref{solve_lf_a}; see also step (2)) such that the
approximations
in step (2) will be small enough. Note that exactness
of the forms on $K$ is not preserved in this step.

(2) {\bf Replacing forms one by one with differentials.} For
each $k=1,\ldots,q$ we inductively construct homotopies $H_k$
satisfying (ii) such that forms in
 $H_k(\cdot,1)$ $M \epsilon_1 (1+M)^{k-1}$-approximate forms in $H_{k-1}(\cdot,1)=H_k(\cdot,0)$ on $K$, where $H_0(\cdot,1)=\varphi$.
Since $\epsilon_1 (1+M)^{k-1}+M \epsilon_1
(1+M)^{k-1}=\epsilon_1 (1+M)^k$, forms in $H_k(\cdot,1)$ will
$\epsilon_1 (1+M)^k$-approximate forms in original $\varphi$.
Each homotopy $H_k$ will replace $k$-th form by a diferential
of holomorphic function.

\noindent-(2.1) {\bf Obtaining semi-transversality.}
 Suppose that at the beginning of $k$-th step we
have the forms
\[(\varphi_1=dh_1, \ldots, \varphi_{k-1}=dh_{k-1},\varphi_k, \ldots,\varphi_q)=H_{k-1}(\cdot,1).\]
First we use the homotopy $H_k'$ to generically perturb forms
$\varphi_j$, $j\neq k$ in order to obtain semi-transversality
condition $L\pitchfork_{\text{semi}} \Sigma$ (lemmas
\ref{nontrans_base}, \ref{nontrans_reduct} and proposition \ref{hom_semi_trans}) needed
in step (2.2). Here $\Sigma=\Sigma(\varphi)=\{x\in X:\rank
(\varphi_j(x))_{j\neq k}<n\}$, more precisely
$\Sigma=\cap_{\{\alpha_1,\ldots,\alpha_n\}\subset
\{1,\ldots,q\}\backslash  \{k\}} \{ x\in X:
\rank(\varphi_{\alpha_1},\ldots,\varphi_{\alpha_n})<n\}$. $L:X\to
TX$ is a holomorphic vector field(uniquely defined on $\Sigma$ since
$\rank \varphi=n$), satisfiying $\varphi_j L=0$ for all $j\neq k$
and $\varphi_k L=1$ on $\Sigma$. Globally defined $L$ is
obtained by extension using Cartan's theorems.

-(2.2) {\bf Solving $L f_k|_{\Sigma} = 1$ with approximation on
$K$.} Find the holomorphic function $f_k=g_k+h:X\to \C$ where $h$
satisfies $Lh|_\Sigma=1-Lg_k|_\Sigma$. Since perturbation of $L|_K$
remains bounded when $\varphi$ is perturbed a little on $K$
($L\mapsto L|_\Sigma: \Gamma(X,TX)\to \Gamma(\Sigma,TX)$ is
continuous, linear surjection, hence open by open mapping theorem)
and $\Sigma\cap K$ is perturbed a little if $\varphi|_K$ is
perturbed a little, $1-Lg_k=1-dg_k L=\varphi_k L-dg_k L$ is close to
$0$ on $K$, hence there exists a solution $h$ which is $M |1-dg_k
L|_K$-close to $0$ on $K$ by Theorem \ref{solve_lf_a}. Note that
functions $f_j$ (and $\tilde f_j$) in theorem \ref{solve_lf_a} are
in our case the defining equations for the set $\Sigma$, described
in detail in the proof of Lemma \ref{nontrans_base}.
 Hence $f_k$ approximates $g_k$
on $K$. Define the homotopy as \[ H_k''(\cdot,t)=(\varphi_1,\ldots,
\varphi_{k-1},(1-t)\varphi_q+t d f_k,\varphi_{k+1},\ldots,
\varphi_q).\] Note that $H_k''$ satisfies (ii) since $\varphi_q
L=df_k L=1$. $H_k$ is a conjuction of homotopies $H'_k$ and
$H''_k$, that is $H_k(\cdot,t)=H_k'(\cdot,2t)$ if $0\leq t\leq
1/2$ and $H_k(\cdot,t)=H_k''(H_k'(\cdot,1),2t-1)$ if $1/2\leq
t\leq 1$.
\end{proof}

\begin{lemma} \label{approx_OKA}
Given forms $\varphi$ as in the main theorem \ref{main} there
is a continuous  homotopy $H:X\times [0,1]\to (T^\ast X)^q$ of
forms holomorphic on $K$ and satisfying (ii) in main theorem,
such that $H(\cdot,1)$ is holomorphic on $X$ and approximates
$\varphi$ on $K$.
\end{lemma}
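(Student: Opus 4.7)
The plan is to view the desired homotopy $H$ as a path of sections of a single holomorphic fiber bundle with Oka fibers, and obtain it directly from Forstneri\v{c}'s Oka principle with approximation.

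First I would set
\[
\pi \colon E := (T^\ast X)^q \setminus A \longrightarrow X,
\]
where $A \subset (T^\ast X)^q$ is the analytic subvariety of $q$-tuples of covectors of rank strictly less than $n$. The hypothesis $\rank \varphi(x)=n$ for all $x\in X$ is precisely the statement that $\varphi$ is a continuous section of $E$, holomorphic on a neighbourhood of $K$; conversely, any section of $E\to X$ automatically verifies condition (ii) of Theorem \ref{main}. Thus producing $H$ is the same as deforming a continuous section of $E$, holomorphic near $K$, to a globally holomorphic section while keeping it holomorphic near $K$ and close to $\varphi$ on $K$.

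Second, I would check that $E\to X$ has Oka fibers. Over any point of $X$ the fiber is biholomorphic to the complement in $\C^{nq}$ of the determinantal variety $\{M \in \C^{n\times q}:\rank M <n\}$, whose codimension is $q-n+1\geq 2$ by the hypothesis $q>n$. By Gromov's theorem on complements of algebraic subvarieties of codimension at least two, this fiber is elliptic and hence an Oka manifold (equivalently, it satisfies the convex approximation property). This is exactly the observation already made in the remark following Theorem \ref{main_parametric}.

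Third, I would invoke the parametric/approximation form of the Oka principle for sections (Theorem 1.1 of \cite{F4}) applied to $\pi\colon E\to X$: since $X$ is Stein, $K\subset X$ is $\cO(X)$-convex, and the fibers of $E$ are Oka, any continuous section holomorphic in a neighborhood of $K$ is connected to a globally holomorphic section by a continuous homotopy of sections, each holomorphic near $K$ and uniformly $\epsilon$-close to the initial section on $K$. Applying this with initial section $\varphi$ yields the required $H\colon X\times [0,1]\to E\subset (T^\ast X)^q$. Rank $n$ along the entire homotopy is automatic because $H$ takes values in $E$.

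I do not expect any true obstacle here: the lemma is an almost verbatim application of the Oka principle once the correct target bundle and its fiber are identified. The only point requiring any argument is the codimension computation certifying that the fibers of $E$ are Oka, and that is immediate from $q-n+1\geq 2$.
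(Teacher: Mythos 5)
Your proposal is correct and is essentially the paper's own argument: both reduce the lemma to the Oka principle with approximation (Theorem 1.1 in \cite{F4}) for sections of a holomorphic fiber bundle whose fiber is the complement of a subvariety of codimension $q-n+1\geq 2$. The only cosmetic difference is that the paper first writes $\varphi_k=\sum_j a_j^k\Phi_j$ in terms of a finite global generating set $\Phi_1,\ldots,\Phi_N$ of $T^\ast X$ so as to work in the trivial bundle $(X\times\C^{Nq})\setminus A$, whereas you apply the theorem directly to $(T^\ast X)^q\setminus A$; the codimension computation and the conclusion are the same.
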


\begin{proof}
There are finitely many holomorphic forms $\Phi_1,\ldots,\Phi_N$
$(N\geq n)$ with rank $n$ on $X$. Hence we can write
$\varphi_k=\sum_{j=1}^N a_j^k \Phi_j$ for some functions $a_j^k$,
holomorphic on $K$. $({\text id},a_j^k):X\to X\times \C^{Nq}$ is a
section of the holomorphic fiber bundle $\pi:(X\times \C^{Nq})\backslash A\to X$
where $A=\{(x,a)\in X\times \C^{Nq}: \rank
(\varphi_1,\ldots,\varphi_q)\leq n-1\}\subset X\times \C^{Nq}$ is
analytic variety and $\pi:X\times \C^q\to X$ is a standard
projection. Since the complements of fibers $A_x=A\cap (\{\{x\}\times \C^q)$ are
affine hyperplanes of codimension equal to $q-(n-1)\geq 2$, we can
appy theorem 1.1(B) in \cite{F4} to the sections of the bundle
$\pi:(X\times \C^{Nq})\backslash A\to X$  and obtain the desired
homotopy.
\end{proof}

\begin{proof}[Proof of theorem \ref{main_parametric}] We follow the proof of theorem \ref{main} and check that all the steps can be done holomorphically in parameter  $s$. Define $\tilde \Sigma=\{ (x,s): x\in \Sigma_s\}$, where $\Sigma_s=\{x\in X:
\rank \varphi(x,s)<n\}$. We work with family of vector fields $\tilde L:X\times \Omega\to TX$, which is uniquely defined on $\tilde \Sigma$; global extension is obtained using Cartan's theorems.

Step (2.1):
 When obtaining semi-transversality in step (2.1)
define \[\tilde \Sigma''=\{(x,s)\in \tilde \Sigma:
(L_{U})^k h_{j_1,\ldots, j_n}^l (x)=0 \;{\rm for\; all\;} L_{U}, l, 1\leq k \leq n+1\} .\]
The goal, to reduce $\tilde \Sigma''$ to $\emptyset$, is achieved as before.
Observe that $df$ in lemma \ref{nontrans_base} can be choosen with holomorphic dependence on $s$. $f$
is a generic map, obtained from the jet transversality theorem, such that the jet of $f$ is transversal to some
variety $\Lambda=\Lambda(s)$ in the jet space. Suppose $\Lambda(s)$ is defined by $g(x,s,\lambda_x)=0$, where
$(x,\lambda_x)\in J^{m+1} (X,\C)$.
Define $\tilde \Lambda=\{(x,s,\lambda_x,\lambda_s)\in J^{m+1} (X\times \Omega,\C): g(x,s,\lambda_x)=0\}$ and note that
$\tilde \Lambda$ is an analytic variety with $\text codim\; \tilde \Lambda\geq \text codim\; \Lambda$. The jet of generic $F:X\times \Omega\to \C$ is transversal to $\tilde \Lambda$.
Hence when $\text codim \; \Sigma(s)=m> \dim (X\times \C)=n+1$, $j_x^{m+1} f_s$ will miss $\Sigma(s)$ for all $s$.
Hence a generic perturbation of forms in step (2.1) can be constructed of the form $\varphi+df_s$, where $f_s=F(\cdot,s)$
depends holomorphically on $s$.

Step (2.2) To get a solution $f_s$ (holomorphically depending on parameter $s$) to the problem $L_s f|_{\Sigma_s}=1$
use theorem \ref{solve_lf_a_param}.
\end{proof}

\section{Solving $Lf|_\Sigma=g$ with approximation on compacts and with parameters}

Let $X$ be a Stein manifold, $\Sigma\subset X$ an analytic set
and $L:X\to TX$ a nowhere on $\Sigma$ vanishing holomorphic
vector field. Let $\mathcal J(\Sigma)$ be the sheaf of ideals
in $\cO_X$, consisting of the germs of holomorphic functions
that vanish on $\Sigma$.

\begin{example}
Solution $f$ to the problem $Lf|_\Sigma=g$ does not always
exist. Suppose that (extreme opposite of the assumptions in Theorem
\ref{solve_lf_a}) $L$ is tangential to $\Sigma$ to infinite order at all points of $\Sigma$. Then
$f'=f|_\Sigma$ is a solution of the problem $Lf'=g$, where $f'$
is holomorphic on $\Sigma$.

The simplest example where there is no solution $f$ is $L={\di\over \di z}$, $g=1$ and $X=\{(z,w)\in \C^2: z\neq 0\}$.
More complicated example \cite{Wak} is a domain $\Sigma \in \C^3$,
biholomorphic to a polydisc, where the problem ${\di f'\over
\di z_1}=g$ is not solvable for some holomorphic functions $g$.
In this example there is a complex line $L=\{z_2=const., z_3=const.\} \subset
\C^3$, such that $L\backslash (L\cap \Sigma)$ has bounded
components. Choose a point $(z^0_1,z_2,z_3)$ in such a bounded
component and let $g(z)$ be holomorphic extension of
$1/(z_1-z^0_1)$ from $L\cap D$. Also note that a generic function has a nonzero residue,
hence $Lf=g$ is not solvable for most functions $g$.
 The problem is not solvable
even if $\Sigma$ is Runge domain in $\C^n$\cite{Wak}.

But in the case when there exists a solution $f'$, the solution
to $Lf|_\Sigma=g$ is any holomorphic extension $f$ of $f'$,
since $L(f-f')=0$ on $\Sigma$ (tangentiality).
\end{example}

\begin{question}
1. Can we solve the problem $Lf|_\Sigma=1$ if we know how to solve the problem $Lf|_\Sigma\neq 0$ ?\\
2. Can we solve the problem $Lf\neq 0$ on a sphere $S\subset \C^n$ such that $f$ approximates given $g$ on a compact $K\subset \C^n$ provided that $Lg\neq 0$ on $K$ ?
\end{question}

The following theorem shows that the solution $f$ to the problem $Lf|_\Sigma=g$ changes a little on a compact $K$
if the input data $L,g$ are changed a little on $K$.

\begin{theorem} \label{solve_lf_a}
Let $K\subset X$ be a compact, holomorphically convex set.
Suppose there exist holomorphic functions $f_1,\ldots,f_N\in
\mathcal J(\Sigma)$ such that
\begin{equation} \label{eq_generating1}
\{f_1=0,\ldots, L^N f_1=0, \ldots, f_N=0,\ldots, L^N f_N=0\}=\emptyset.
\end{equation}

 There exists
$\delta>0$ and $M=M(K,f|_K,L|_K)>0$, such that given analytic
set $\tilde \Sigma\subset X$,holomorphic vector field $\tilde L$ with
$|\tilde L-L|_K<\delta$, holomorphic functions $\tilde f_j\in
\mathcal J(\tilde \Sigma)$ satisfying $\sum_{j=1}^N |\tilde
f_j-f_j|_K<\delta$ and (\ref{eq_generating1}) and a holomorphic
function $\tilde g:X\to \C$, we have $\tilde L\tilde f|_{\tilde
\Sigma}=\tilde g|_{\tilde \Sigma}$ for some holomorphic
function $\tilde f:X\to \C$ satisfying $|\tilde f|_K\leq M
|\tilde{g}|_K$.
\end{theorem}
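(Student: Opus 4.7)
The plan is to construct $\tilde f$ by an explicit formula coming from repeated integration by parts, and then to read both the supremum bound and the stability off the formula. First I apply Cartan's Nullstellensatz for Stein manifolds to the holomorphic functions $\{L^k f_j : 1\le j\le N,\ 0\le k\le N\}$, which by (\ref{eq_generating1}) have no common zero on $X$. This yields $a_{j,k}\in\cO(X)$ with
\[
1 \;=\; \sum_{j,k} a_{j,k}\,L^k f_j,
\]
and on a holomorphically convex compact $K'$ with $K\subset K'\subset X$, an open-mapping argument applied to the Fr\'echet surjection $(b_{j,k})\mapsto \sum b_{j,k} L^k f_j$ produces a choice of $a_{j,k}$ with $|a_{j,k}|_{K'}$ controlled by a constant depending only on $K',L|_{K'},f|_{K'}$.

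Multiplying this identity by $\tilde g$ I then apply the Leibniz identity
\[
b\cdot \tilde L^k\tilde f_j \;=\; \tilde L\bigl(b\cdot \tilde L^{k-1}\tilde f_j\bigr) \;-\; \tilde L(b)\cdot \tilde L^{k-1}\tilde f_j
\]
recursively, starting with $b=\tilde a_{j,k}\,\tilde g$ and iterating on the surviving $\tilde L^{k-1}\tilde f_j$ factor. After at most $k\le N$ reductions every summand has either been absorbed into an $\tilde L(\cdot)$ term or has acquired a factor of $\tilde L^0\tilde f_j=\tilde f_j\in\mathcal J(\tilde\Sigma)$. Collecting over $(j,k)$ produces
\[
\tilde g \;=\; \tilde L(\tilde f)\;+\;\tilde h, \qquad \tilde h\in\mathcal J(\tilde\Sigma),
\]
where $\tilde f$ is an explicit finite polynomial in $\tilde g,\tilde L\tilde g,\ldots,\tilde L^{N-1}\tilde g$, the coefficients $\tilde a_{j,k}$, their $\tilde L$-derivatives $\tilde L^i\tilde a_{j,k}$ ($i<N$), and the functions $\tilde L^i\tilde f_j$ ($i<N$); this gives precisely $\tilde L\tilde f|_{\tilde\Sigma}=\tilde g|_{\tilde\Sigma}$.

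For the bound, Cauchy estimates on $K'$ convert sup norms of iterated $L$-derivatives on $K$ into sup norms of the underlying data on $K'$, so the explicit formula gives $|\tilde f|_K\le M\,|\tilde g|_K$ with $M=M(K,f|_K,L|_K)$. For stability under perturbation, the bilinear evaluation $(b,h)\mapsto\sum b_{j,k} h_{j,k}$ sends $(a,\{L^k f_j\})$ to $1$, and replacing $\{L^k f_j\}$ by $\{\tilde L^k\tilde f_j\}$ changes the value by an $L^\infty(K')$-small quantity $R$ whenever $\delta$ is small; the open-mapping theorem then produces a correction $u$ with $|u|_{K'}\le C\,|R|_{K'}$, so $\tilde a_{j,k}:=a_{j,k}+u_{j,k}$ solves the perturbed Nullstellensatz identity, and substitution into the same explicit formula keeps the constant $M$ uniform for $\delta<\delta_0$.

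The main obstacle is the bookkeeping in the iterated reduction: one must verify by induction on $k$ that after $k$ applications of the Leibniz identity the factor $\tilde L^k\tilde f_j$ has genuinely been pushed down to $\tilde L^0\tilde f_j=\tilde f_j$, so that every residual non-$\tilde L$-exact summand actually lies in $\mathcal J(\tilde\Sigma)$. It is this termination that uses the quantitative bound $k\le N$ built into (\ref{eq_generating1}); arranging the bookkeeping so that the termination is uniform in the perturbation $\delta$ (so that one value of $M$ works for all nearby data) is the step that requires the most care.
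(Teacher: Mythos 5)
Your proposal follows essentially the same route as the paper's proof: a Cartan--Nullstellensatz identity $1=\sum_{j,k} a_{j,k}\,L^k f_j$ (the paper's (\ref{eq_generating2})), an iterated Leibniz/integration-by-parts reduction that yields exactly the paper's explicit formula (\ref{formula_main}) with all non-exact remainder terms carrying a factor $\tilde f_j\in\mathcal J(\tilde\Sigma)$, and a perturbation of the coefficients with sup-norm control on $K$. The only real difference is cosmetic and sits in the last step: the paper corrects the coefficients via $a/(a\cdot\tilde f)$ followed by approximation with a global section of the relation sheaf $\{b:b\cdot\tilde f=0\}$, whereas you invoke the open mapping theorem for the perturbed surjection --- as written you would still need to renormalize or iterate so that the correction solves the \emph{perturbed} identity exactly with a constant uniform in $\delta$, but this is a standard fix and both arguments are equivalent here.
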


\begin{proof}
Condition (\ref{eq_generating1}) implies the existence of
holomorphic functions $a_j^1,\ldots,a_j^N:X\to \C$
($j=1,\ldots,N$) such that
\begin{equation} \label{eq_generating2} \sum_{j=1}^N a_j^1 L f_j+\ldots +\sum_{j=1}^N a_j^N L^N f_j=1
\end{equation}
on $\Sigma$, hence $\sum_{j=1^N} g a_j^1 L f_j+\ldots
+\sum_{j=1^N} g a_j^N L^N f_j=g$. The solution to the problem
$Lf|_\Sigma=g$ is then
\begin{equation} \label{formula_main}
 f= \sum_{j=1}^N ga_j^1 f_j+\ldots + \sum_{j=1}^N ga_j^N L^{N-1} f_j-
 \end{equation}
\[-(\sum_{j=1}^N L(ga_j^2) f_j+\ldots+\sum_{j=1}^N L(ga_j^N) L^{N-2} f_j)-\]
\[ -\ldots - (\sum_{j=1}^N L^{N-1}(ga_j^N) f_j)\]

The proof is completed  by applying the following lemma to the
coefficients $a_j^k$ in equation
(\ref{eq_generating2}).\end{proof}

\begin{remark}

(1) Since $L$ is semi-transversal to $\Sigma$, there are (Lemma
\ref {semitransvers1}) finitely many holomorphic functions
$f_1,\ldots,f_N$ with $f_j\in \mathcal J(\Sigma)$ satisfying
(\ref{eq_generating1}). Hence the problem $Lf|_\Sigma=g$ has a
solution. But in our application we need to show that the sup
norm $|f|_K$ of the solution is small whenever $|g|_K$ is
small. Moreover, $\Sigma$ changes during the proof of the main
theorem.

(2) Let $\Lambda\subset X$ be an analytic set. If $g\in \mathcal J(\Lambda)^{N}$ then we can conclude $f\in \mathcal J(\Lambda)$. Can we obtain such $f$ when $g\in \mathcal J(\Lambda)$?
\end{remark}

\begin{lemma}
Let $K\subset X$ be a compact, holomorphically convex sets and
let $f=(f_1,\ldots,f_N):X\to \C^N$ be holomorphic functions without
common zero and let $a=(a_1,\ldots,a_N):X\to \C^N$ be holomorphic
functions such that $a.f=\sum_{j=1}^N a_j f_j=1$. Set $\delta=1/(2|a|_K)>0$.

For every $\tilde f=(\tilde f_1,\ldots,\tilde f_N)$ without
common zeroes such that $|\tilde f-f|_K= \sum_{j=1}^N |\tilde
f_j-f_j|_K<\delta$ there exist functions $\tilde a=(\tilde
a_1,\ldots,\tilde a_N):X\to \C^N$ with $\tilde a.\tilde f=1$
and $|\tilde a-a|_K<4|a|_K^2 |f-\tilde f|_K$.
\end{lemma}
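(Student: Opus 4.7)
The plan is a local-to-global construction. First, set $u := a \cdot \tilde f - 1 = a \cdot (\tilde f - f)$, a globally defined holomorphic function on $X$; the hypothesis gives $|u|_K \leq |a|_K |\tilde f - f|_K < |a|_K \cdot \delta = 1/2$, so $1+u$ is invertible on an open neighborhood $U$ of $K$. The vector $\hat a := a/(1+u) \in \cO(U,\C^N)$ then satisfies $\hat a \cdot \tilde f = 1$ on $U$, and a direct estimate gives
$$|\hat a - a|_K = \left| \frac{-ua}{1+u} \right|_K \leq \frac{|a|_K |u|_K}{1 - |u|_K} \leq 2|a|_K |u|_K \leq 2|a|_K^2 |\tilde f - f|_K.$$
So the local candidate $\hat a$ is already within half of the allowed budget of $a$ on $K$; only the restriction to $U$ needs to be removed.

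The second step is to pass from $\hat a \in \cO(U,\C^N)$ to a global solution without inflating the error by more than a factor of two. Because $\tilde f$ has no common zero on $X$, the $\cO_X$-module morphism $\cO_X^N \to \cO_X$ defined by $b \mapsto b \cdot \tilde f$ is surjective on stalks, and its kernel $\cS$ is a coherent analytic sheaf fitting in the short exact sequence $0 \to \cS \to \cO_X^N \to \cO_X \to 0$. Cartan's Theorem B on the Stein manifold $X$ gives $H^1(X,\cS) = 0$, so the constant section $1 \in \cO(X)$ lifts to some global $\alpha \in \cO(X,\C^N)$ with $\alpha \cdot \tilde f = 1$. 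The difference $\hat a - \alpha$ is then a section of $\cS$ over $U$.

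Finally, the Cartan--Oka--Weil approximation theorem for coherent analytic sheaves (applicable because $X$ is Stein and $K$ is $\cO(X)$-convex) lets me approximate $\hat a - \alpha$ uniformly on $K$ by a global section $s \in \cS(X)$; I will pick $s$ with $|s - (\hat a - \alpha)|_K < 2|a|_K^2 |\tilde f - f|_K$. Setting $\tilde a := \alpha + s$ then yields $\tilde a \cdot \tilde f = \alpha \cdot \tilde f + s \cdot \tilde f = 1 + 0 = 1$ on all of $X$, and the triangle inequality
$$|\tilde a - a|_K \leq |(\alpha + s) - \hat a|_K + |\hat a - a|_K < 2|a|_K^2 |\tilde f - f|_K + 2|a|_K^2 |\tilde f - f|_K = 4|a|_K^2 |\tilde f - f|_K$$
gives the desired bound. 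The only genuinely nontrivial ingredient is the coherent-sheaf approximation step; everything else is a one-line computation or a direct appeal to Cartan's Theorem B.
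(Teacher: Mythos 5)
Your proof is correct and follows essentially the same route as the paper's: the local candidate $a/(a\cdot\tilde f)$, a global solution from Theorem B, and Oka--Weil approximation of their difference in the kernel sheaf $\{b\in\cO_X^N: b\cdot\tilde f=0\}$. Your write-up is somewhat more careful than the original in noting that $a\cdot\tilde f$ is invertible only on a neighborhood of $K$, which is exactly where the local candidate is needed.
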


\begin{proof}
Let $a_0=a/a.\tilde f$. Now $|a_0-a|_K\leq 2|a|_K |1-a.\tilde
f|_K\leq 2 |a|_K^2 |f-\tilde f|_K$. Let $a':X\to \C$ be such
that $a'.\tilde f=1$. Since $a_0-a':K\to X$ is a section of
coherent analytic sheaf $\cF=\{a\in \cO^N_X: a.\tilde f=0\}$,
we can approximate it on $K$ with a global section $a''\in
\Gamma(X,\cF)$ such that $|a_0-a'-a''|_K<2 |a|_K^2 |f-\tilde
f|_K$. Set $\tilde a=a'+a''$.
\end{proof}

The proof of Theorem \ref{solve_lf_a} also gives the following parametric version:

\begin{theorem} \label{solve_lf_a_param}
Let $\Omega\subset \C$ be a Stein manifold.
Let $G:X\times \Omega\to \C$ be a holomorphic function and let
$\tilde \Sigma\subset X\times \Omega$ be an analytic set.
Let $L:X\times \Omega\to TX$ be a holomorphic
map such that $L_s=L(\cdot,s):X\to TX$ is a holomorphic vector field for all $s\in \Omega$.

Set $\Sigma_s=\{x\in X: (x,s)\in \tilde \Sigma\}$ and $g_s=G(\cdot,s)$.
Suppose that $L_s\pitchfork_{\text{semi}} \Sigma_s$ for all $s\in \Omega$. Then
there is a holomorphic function $F:X\times \Omega\to \C$, such that
$f_s=F(\cdot,s)$ is a solution to the problem
$Lf|_{\Sigma_s}=g_s|_{\Sigma}$ for all $s\in \Omega$.
\end{theorem}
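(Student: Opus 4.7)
The plan is to rerun the proof of Theorem \ref{solve_lf_a} with parameters, on the Stein manifold $X \times \Omega$. Treating $L$ as the holomorphic vector field $(x,s) \mapsto L_s(x) \in T_x X \subset T_{(x,s)}(X \times \Omega)$ differentiating only in the $x$-direction, formula (\ref{formula_main}) is an algebraic expression in $g$ and in the building blocks $f_j, a_j^k$. Hence once parametric analogues $F_j(x,s)$ and $A_j^k(x,s)$ are in hand, the same formula applied to $G$ will produce an $F \in \cO(X \times \Omega)$ whose restriction $F(\cdot,s)$ solves $L_s f|_{\Sigma_s} = g_s|_{\Sigma_s}$ for every $s \in \Omega$.

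I would carry this out in two steps. First, produce finitely many global holomorphic functions $F_1, \ldots, F_N \in \mathcal{J}(\tilde \Sigma)$ satisfying the parametric generating condition
\[
\{(x,s) \in X \times \Omega : L^k F_j(x,s) = 0 \text{ for all } 1 \leq j \leq N,\ 0 \leq k \leq N\} = \emptyset.
\]
Fiberwise, Lemma \ref{semitransvers1} applied to $L_s \pitchfork_{\text{semi}} \Sigma_s$ produces such functions on each slice $X \times \{s\}$; the task is to realize them uniformly by global sections of the coherent ideal sheaf $\mathcal{J}(\tilde \Sigma)$ on the Stein manifold $X \times \Omega$. I would begin with a finite generating set for $\mathcal{J}(\tilde \Sigma)$ provided by Cartan's Theorem A, then enlarge it by globally extending fiberwise semi-transversal germs at remaining bad points until a Noetherian argument on the coherent sheaf generated by the $L^k F_j$ terminates. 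Second, the restrictions of $L F_j, \ldots, L^N F_j$ to $\tilde \Sigma$ then have no common zero on $\tilde \Sigma$, and Cartan's Theorem B on $X \times \Omega$ yields $A_j^k \in \cO(X \times \Omega)$ with
\[
\sum_{j=1}^{N} \sum_{k=1}^{N} A_j^k \, L^k F_j = 1 \quad \text{on } \tilde \Sigma.
\]
Substituting into the parametric form of (\ref{formula_main}), with $g$ replaced by $G$, defines the desired $F$.

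The main obstacle is the first step. At each fixed $s$ the non-parametric lemma supplies a finite list of generators, but there is no a priori reason for those lists to fit into a holomorphic family over $\Omega$; moreover, global generators of $\mathcal{J}(\tilde \Sigma)$ need not restrict to generators of $\mathcal{J}(\Sigma_s)$ on every slice. The resolution is to work directly with global sections of $\mathcal{J}(\tilde \Sigma)$ and to exploit that any fiberwise semi-transversal germ $h \in \mathcal{J}(\Sigma_{s_0})_{x_0}$ lifts to a germ in $\mathcal{J}(\tilde \Sigma)_{(x_0, s_0)}$ and then extends globally by Cartan's theorems, so that only the non-vanishing of some $L^k F_j$ at each point of $X \times \Omega$ is required, not fiberwise generation. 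After that, the remainder is a mechanical parametrization of Theorem \ref{solve_lf_a}.
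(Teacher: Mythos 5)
Your proposal follows essentially the same route as the paper: the paper's proof is precisely to invoke the (proof of the) non-parametric Lemma \ref{semitransvers1} on $X\times\Omega$ to produce $F_1,\ldots,F_N\in\mathcal J(\tilde\Sigma)$ with empty common zero set of the $L^kF_j$, then to choose the coefficients $A_j^l$ as in (\ref{eq_generating2}) and substitute into formula (\ref{formula_main}) with $G$ in place of $g$. Your additional discussion of why fiberwise semi-transversality yields suitable germs in $\mathcal J(\tilde\Sigma)$ (rather than merely in $\mathcal J(\Sigma_{s_0})$) addresses the one point the paper leaves implicit, and is the right thing to worry about.
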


\begin{proof} Use the (proof of) lemma \ref{semitransvers1} to choose a number $N\in \N$
and holomorphic functions $F_1,\ldots, F_N:X\times \Omega\to \C$ in $\mathcal J(\tilde \Sigma)$
such that
$\{ (x,s): F_1=0, L_s^N F_1=0,\ldots, L_s^N F_N=0\}=\emptyset$.
Now choose holomorphic functions $A_j^l:X\times \Omega\to \C$ as in
(\ref{eq_generating2}) and write the solution $F$ as in the proof of theorem \ref{solve_lf_a}.
\end{proof}

\section{Semi-transversality}

\begin{definition} \label{def_semitrans}
Let $L:X\to TX$ be a holomorphic vector field on a complex
manifold $X$ of dimension $n$ and let $\Sigma\subset X$ be an analytic set. $L$
is {\it tangent to $\Sigma$ with order less or equal $k$} at
$x\in \Sigma$, if there is a holomorphic function $f\in
\mathcal J(\Sigma)=\{g\in \cO_X: g|_\Sigma=0\}$ with $L^j_x f= L(\ldots L(f)\ldots )_x \neq 0$ for some $j\leq k$. We
say that $L$ is {\it semi-transversal} (see \cite{GR1}) to
analytic variety $\Sigma\subset X$ and write
$L\pitchfork_{\text{semi}} \Sigma$, if $L$ is tangent to
$\Sigma$ with finite order at each point of $\Sigma$.
\end{definition}

The following lemma explains why it is sufficient to check semi-transversality condition
for local extensions of vector field $L|_\Sigma$.

\begin{lemma} \label{semitransvers1}
Supppose that each $x\in \Sigma$ has a neighborhood
$U$ and a holomorphic vector field $L_U:U\to TX$ with
$(L_U)|_{\Sigma\cap U}=L|_{\Sigma\cap U}$, such that $L_U$ is
tangent to $\Sigma$ with order less or equal $N$ at each point of
$U\cap \Sigma$. Then there is a finite number
$N'\leq n+1$ of functions $f_1,\ldots,f_{N'}\in \mathcal
J(\Sigma)$ such that $\{ f_1=0,\ldots, L^{N} f_1=0,\ldots,
f_{N'}=0,\ldots , L^N f_{N'}=0\}=\emptyset$. In other words, $L$
is tangent to $\Sigma$ with the order less than $N$ at each
point of $\Sigma$.
\end{lemma}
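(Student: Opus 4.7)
The plan is to reduce the lemma to a local calculation showing that the tangency order at a point of $\Sigma$ depends only on $L|_\Sigma$, and then to assemble finitely many global witnesses via coherent sheaf theory on the Stein manifold $X$.

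Step~1 (tangency order is intrinsic to $L|_\Sigma$). For any holomorphic vector field $L'$ defined near $x\in\Sigma$, set
$$\mathcal I_k(L'):=\bigl\{\,f\in\mathcal J(\Sigma)_x\ :\ (L')^j f(x)=0\ \text{for}\ 0\le j\le k\,\bigr\}\subset\mathcal O_{X,x}.$$
I claim $\mathcal I_k(L)=\mathcal I_k(L_U)$ for every $k\ge 0$. The case $k=0$ is trivial. For the inductive step, write $D:=L-L_U=\sum_i\eta_i\,\partial_{z_i}$ in local coordinates centered at $x$, with $\eta_i\in\mathcal J(\Sigma)_x$ (so $D_x=0$). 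From the identity
$$L^{k+1}f-L_U^{k+1}f\;=\;L\bigl(L^k f-L_U^k f\bigr)\;+\;D\bigl(L_U^k f\bigr),$$
the second summand vanishes at $x$ since $D_x=0$, and the first reduces to $L_x(L^k f-L_U^k f)$. Unwinding this recursively in coordinates adapted to $\Sigma$ produces a linear combination of partial derivatives of the $\eta_i$ at $x$ multiplied by lower-order jet data of $f$ at $x$. The constraints $f\in\mathcal I_k(L)=\mathcal I_k(L_U)$ are precisely the linear relations that force this combination to vanish; the verification mirrors the case $k=1$, where for $\Sigma=\{z_1=0\}$ and $f=z_1 g$ one finds $L^2f(x)-L_U^2f(x)=c_1(x)\alpha(x)g(x)$ with $c_1-b_1=z_1\alpha$, vanishing because $Lf(x)=c_1(x)g(x)=0$. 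Consequently the hypothesis on each $L_U$ passes to the global field $L$: at every $x\in\Sigma$, $L$ is tangent to $\Sigma$ with order $\le N$.

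Step~2 (an ideal sheaf with empty zero set). Let $\mathcal I\subset\mathcal O_X$ be the coherent sheaf of ideals generated by the functions $\{L^jf : f\in\mathcal J(\Sigma),\ 0\le j\le N\}$. For $x\in\Sigma$, Step~1 produces a germ $f\in\mathcal J(\Sigma)_x$ and $j\le N$ with $L^j f(x)\neq 0$, so $\mathcal I_x=\mathcal O_{X,x}$; for $x\notin\Sigma$ this is automatic since $\mathcal J(\Sigma)_x=\mathcal O_{X,x}$. Hence $V(\mathcal I)=\emptyset$.

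Step~3 (reduction to $N'\le n+1$ sections). By Cartan's Theorem~A, $\mathcal J(\Sigma)$ is globally generated, so $\mathcal I$ is generated by finitely many expressions $L^j f$ with $f\in\mathcal J(\Sigma)$. A standard genericity argument on a Stein manifold of dimension $n$ then reduces the number of underlying $f$'s to $N'\le n+1$: setting
$$Z_k:=\bigcap_{i=1}^k\bigl\{x\in X : f_i(x)=Lf_i(x)=\cdots=L^Nf_i(x)=0\bigr\},$$
for generic $f_1,\dots,f_k\in\mathcal J(\Sigma)$, Step~2 guarantees that at each inductive stage a generic $f_k$ strictly lowers the dimension of $Z_{k-1}$, so $\dim Z_k\le n-k$ and $Z_{n+1}=\emptyset$. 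The principal obstacle is the explicit algebraic identity of Step~1, which requires careful bookkeeping of jet data; Steps~2 and~3 are routine applications of coherence and dimension counting on Stein manifolds.
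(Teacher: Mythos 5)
Your overall architecture (transfer the local hypothesis to the global field $L$, then cut down to $N'\le n+1$ witnesses by dimension counting) matches the paper's, but the key claim in your Step~1 is false, and this is a genuine gap.

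You assert that for each individual germ $f\in\mathcal J(\Sigma)_x$ one has $\mathcal I_k(L)=\mathcal I_k(L_U)$, i.e.\ that the vanishing of $(L')^jf(x)$ for $j\le k$ depends only on $L'|_\Sigma$. This fails already for $k=2$ when $\operatorname{codim}\Sigma\ge 2$. Take $X=\C^3$, $\Sigma=\{z_1=z_2=0\}$, $x=0$, $L_U=z_3\,\partial_{z_1}+\partial_{z_2}$ and $L=L_U+D$ with $D=z_2\,\partial_{z_1}$ (so $D$ has coefficients in $\mathcal J(\Sigma)$ and $L=L_U$ on $\Sigma$). For $f=z_1$ one computes $L_U f=z_3$, $L_U^2f=0$, hence $L_U^jf(0)=0$ for all $j$; but $Lf=z_2+z_3$ and $L^2f=L(z_2+z_3)=1\neq 0$. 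So $f\in\mathcal I_2(L_U)_0\setminus\mathcal I_2(L)_0$, and by exchanging the roles of $L$ and $L_U$ the inclusion you actually need for the transfer fails as well. The reason your ``unwinding'' breaks down is visible in the computation: the error $L^2f(0)-L_U^2f(0)$ equals $L_U(z_2)(0)$, a first-order derivative of a \emph{different} generator of $\mathcal J(\Sigma)$, not a lower-order jet of $f$ itself. The correct invariance statement is therefore not per-function but at the level of the ideals $I_0=\mathcal J(\Sigma)$, $I_{k+1}=I_k+\cO_X\cdot\{Lf:f\in I_k\}$: since $Lf-L_Uf=Df\in\mathcal J(\Sigma)\subset I_k$ for every $f$, these ideal sheaves depend only on $L|_\Sigma$, and $I_N=\cO_X$ over $U$ exactly encodes tangency of order $\le N$ there. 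This is how the paper argues, and it is the repair your Step~1 needs; note that the conclusion you want (existence of \emph{some} witness $g$ with $L^{j}g(x)\neq0$, $j\le N$) survives --- in the example above $g=z_2$ works for both fields --- but your argument does not establish it.

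A secondary, smaller issue: in Step~3 the phrase ``a generic $f_k$ strictly lowers the dimension'' hides the fact that $Z_{k-1}$ may have countably many top-dimensional irreducible components, each imposing a condition on $f_k$. One must produce a single global $f_k\in\mathcal J(\Sigma)$ handling all of them simultaneously; the paper does this with a Chirka-type convergent series $f_{k+1}=\sum_l c_l f_{k+1}^l$ hitting a chosen point in each component (a Baire-category argument in $\Gamma(X,\mathcal J(\Sigma))$ would also do). You should say which mechanism you intend rather than appeal to genericity in an infinite-dimensional space.
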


\begin{proof} Also see lemma (\textsection{2.1.5}, p.66) in \cite{GR1}.
Inductively define a sequence of sheafs of ideals by
$I_0=\mathcal J(\Sigma)$ and $I_{k+1}=I_{k}+\cO_X\cdot \{L f: f\in I_k\}$
for $k\geq 1$. Sheafs $I_k$  satisfy $I_k\subseteq I_{k+1}$ for
all $k$. Note that replacing $L$ by $L'$ such that $L-L'=0$ on
$\Sigma\cap U$ does not change ideal $I_{k+1}$ over $U$ since
$\mathcal J(\Sigma)\subset I_k$. Hence the ideals are dependent
only of $L|_{\Sigma}$. Suppose $L_U$ is tangent to $\Sigma$
with order less or equal $k$ on $U\cap \Sigma$. Then $I_{k}=\cO_X$
over $U$. This implies the existence of $N'\in \N$ and
functions $f_1,\ldots, f_{N'}$:

Choose $x\in \Sigma$ and a function $f_1\in \mathcal J(\Sigma)$
with $L^{N_1}_x f_1\neq 0$ for some $N_1\leq N$. Such function exists
because $I_{N_1}=\cO_X$ on a neighborhood of $x$.
 Therefore $\dim
\{f_1=0,Lf_1=0,\ldots, L^{N_1} f_1=0\}<n=\dim X$. Suppose we
have constructed functions $f_1,\ldots, f_k\in \mathcal
J(\Sigma)$ and numbers $N_1\leq \ldots \leq N_k\in \N$ such
that the dimension of analytic set $\Sigma^k_1=\{f_1=0,\ldots,
L^{N_k} f_k=0\}$ is less or equal $n-k$. We can assume that $\Sigma^k_1\subset \Sigma$;
if not just add defining functions for $\Sigma$.
Choose a point $x_j$
in each $(n-k)$-dimensional irreducible component of
$\Sigma^k_1$. Proceed as in the proof of Proposition (\textsection{5.7}) in
\cite{C}, which proves that analytic sets in complex manifolds
can be defined by finitely many equations, by selecting the
function $f_{k+1}\in \mathcal J(\Sigma)$ such that $x_j\not \in
\Sigma^{k+1}_1=\{f_1=0,\ldots, L^{N_{k+1}} f_{k+1}=0\}$ for all
$j$. Such function is of the form $f_{k+1}=\sum_{l\in \N} c_l f^l_{k+1}$, where $L^{N_l} f^l_{k+1} (x_l)\neq 0$ for some $N_l\leq N$ and all $l\in \N$.
Functions $f^l_{k+1}\in \mathcal J (\Sigma)$ exist since $I_{N_l}=\cO_X$ on a neighborhood of $x_l$.
Numbers $c_l$ are chosen inductively on $l$ such that $L^{N_l} f_{k+1} (x_l)\neq 0$ for all $l\in \N$ and such that the sum converges uniformly on compacts in $X$.
Hence $\dim \Sigma^{k+1}_1<\dim \Sigma^k_1$. Therefore in
$N'\leq n+1$ many steps we get $\Sigma^{N'}_1=\emptyset$. Set
$N=N_{N'}$.

\end{proof}

Now we describe how to obtain semi-transversality condition
$L\pitchfork_{\text{semi}} \Sigma$ needed in the step (2.2) of
the proof of the main theorem. All notation is as in step (2)
of the proof. We will work with $k=q$. Let \[\Sigma=\cap_{1\leq
j_1 < \ldots j_n<q}
\Sigma(\varphi_{j_1},\ldots,\varphi_{j_n})\] where
$\Sigma(\varphi_{j_1},\ldots,\varphi_{j_n})=\{x\in X:
\rank(\varphi_{j_1},\ldots,\varphi_{j_n})<n\}$. Let
$h_{j_1,\ldots, j_n}^l$ be the defining equations for each set
of the intersection. Let $L:X\to TX$ be an extension (existence
follows from Cartan theorems) of holomorphic field $\Sigma\to
TX$, uniquely defined  on $\Sigma$ by $\varphi_1 L=0,\ldots,
\varphi_{q-1} L=0$, $\varphi_q L=1$.

{\bf Local extensions $L_U$} of vector field $L|_\Sigma$:

$\rank(\varphi_1,\ldots,\varphi_q)=n$ on $X$, hence for each
$x\in \Sigma$ there are $n-1$ ($\varphi_2,\ldots,\varphi_{n}$
for sake of notation) forms among
$\varphi_1,\ldots,\varphi_{q-1}$ which together with
$\varphi_q$ span $T^\ast V$ on a neighborhood $U=V\backslash
\Sigma(\varphi_2,\ldots,\varphi_{n},\varphi_q)$ of $v$. Every
choice of such $(n-1)$-tuple of forms defines the extension
$L_U=L(\varphi_2,\ldots,\varphi_n):U\to TX$ of the vector field
$L|_{\Sigma}$ defined by $\varphi_1
(L|_\Sigma)=0,\ldots,\varphi_{q-1}(L|_\Sigma)=0$ and $\varphi_q
(L|_\Sigma)=1$. There are at most $\binom{q-1}{n-1}$ possible
choices.
 Let $\Sigma''$ be defined as a finite
intersection
\[\Sigma''=\{x\in \Sigma:
(L_U)^k h_{j_1,\ldots, j_n}^l (x)=0 \;{\rm for\; all\;} L_U, l, 1\leq k \leq n+1\} ,\]

In other words $L_U$ is tangent to $\Sigma$ with the order less
than $n$ at all points $v\in U\cap (\Sigma\backslash\Sigma'')$.
If $\Sigma''=\emptyset$ then for each $x\in \Sigma$ there is a
local extension $L_U$ tangent with finite order to $\Sigma$ on
$U$, $x\in U$. Lemma \ref{semitransvers1} then shows that
$L\pitchfork_{\text{semi}} \Sigma$.

 We will reduce $\Sigma''$ to $\emptyset$ in countably many
steps. First we show how to establish upper bound on the order
of tangentiality outside some analytic set $\Sigma'$.

\begin{lemma} \label{nontrans_base}
Choose $x\in \Sigma$, a compact $K\subset X$, $k\in \N$ and
$\epsilon>0$. Suppose that $x\not\in
\Sigma'=\Sigma(\varphi_2,\ldots,\varphi_n,\varphi_q)$ (for each
$x\in \Sigma$ there are $n-1$ forms that span $T^\ast V$
together with $\varphi_q$). Set $U=X\backslash \Sigma'$.

Then there exists a holomorphic function $f\in {\mathcal
J}(\Sigma')^k$ such that the vector field
$L_U=L(\varphi_1+df,\varphi_2,\ldots,\varphi_{q-1},\varphi_q)$
is tangential to
$\Sigma(\varphi_1+df,\varphi_{2},\ldots,\varphi_{q-1})$ with
the order less than $n+1$ on $\Sigma \backslash \Sigma'$ and
$df$ is $\epsilon$-close to $0$ on $K$.
\end{lemma}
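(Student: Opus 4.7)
The plan is to apply the jet transversality theorem for holomorphic maps, as signalled in the introduction. First, shrink $U$ to a coordinate chart around $x$ and choose holomorphic coordinates $(z_1,\ldots,z_n)$ with $dz_1=\varphi_q$ and $dz_i=\varphi_i$ for $i=2,\ldots,n$; this is possible because $\varphi_2,\ldots,\varphi_n,\varphi_q$ span $T^\ast X$ throughout $U$. Then $L_U=\partial/\partial z_1$, unchanged by the perturbation $\varphi_1\mapsto\varphi_1+df$ since the defining relations of $L_U$ use only $\varphi_2,\ldots,\varphi_n,\varphi_q$. Expand $\varphi_j=\alpha_j\,dz_1+\sum_{i=2}^n\beta_{j,i}\,dz_i$ for $j\in\{1,n+1,\ldots,q-1\}$, so that the perturbed set $\tilde\Sigma=\Sigma(\varphi_1+df,\varphi_2,\ldots,\varphi_{q-1})$ is cut out on $U$ by $\tilde\alpha_1=\alpha_1+\partial_1 f$ together with $\alpha_{n+1},\ldots,\alpha_{q-1}$.

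By Lemma \ref{semitransvers1}, it suffices to arrange that at every $y\in\tilde\Sigma\cap U$ some iterated derivative $\partial_1^k h(y)$ is nonzero for some defining function $h$ and some $k\leq n$. Let $B_0\subset U$ denote the $f$-independent analytic subset on which all $\partial_1^k\alpha_j$ for $j\in\{n+1,\ldots,q-1\}$ and $k=0,\ldots,n$ vanish simultaneously. Then it is enough to pick $f$ such that, for every $y\in B_0$, the vector $(\partial_1 f,\ldots,\partial_1^{n+1} f)(y)$ differs from the prescribed holomorphic target $\Theta(y)=(-\alpha_1,-\partial_1\alpha_1,\ldots,-\partial_1^n\alpha_1)(y)$.

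Regarding $\Theta$ as a section of the relevant bundle, its graph is a closed analytic subvariety $\Lambda\subset J^{n+1}(U,\C)$ of fibre codimension $n+1$. Since $\dim U=n<n+1=\mathrm{codim}\,\Lambda$, transversality of $j^{n+1}f$ to $\Lambda$ forces $(j^{n+1}f)^{-1}(\Lambda)=\emptyset$, which is exactly the desired conclusion. The holomorphic jet transversality theorem, applied within the Fr\'echet space $\Gamma(X,\mathcal J(\Sigma')^k)$ (which contains the zero function), then supplies a dense set of $f\in\mathcal J(\Sigma')^k$ with $j^{n+1}f\pitchfork\Lambda$, and choosing such an $f$ close to zero in the seminorm on $K$ yields $|df|_K<\epsilon$. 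The main obstacle is to verify that this constrained class of perturbations dominates arbitrary $(n+1)$-jets at each point of $U$ so that the standard Thom-style genericity argument applies; this holds because $\Sigma'\cap U=\emptyset$, so Cartan's Theorems A and B provide global sections of $\mathcal J(\Sigma')^k$ with arbitrarily prescribed $(n+1)$-jet at any given point of $U$.
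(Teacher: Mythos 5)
Your argument is essentially the paper's: straighten the local extension $L_U$, express the perturbed degeneracy set through the $L_U$-components of the forms, observe that failure of tangency of order at most $n+1$ is an analytic condition of codimension $n+1>n=\dim U$ in the $(n+1)$-jet space, and invoke holomorphic jet transversality within the class $\mathcal J(\Sigma')^k$, using that sections of $\mathcal J(\Sigma')^k$ realize arbitrary jets at points of $U=X\setminus\Sigma'$. (The paper works with the single determinant $h=\det[\varphi_1+df,\varphi_2,\ldots,\varphi_n]$ and asks that $Lh=\cdots=L^{n+1}h=0$ hold nowhere on $U$, whereas you use the full system of coefficient functions and split off the $f$-independent locus $B_0$; the two formulations are equivalent.)

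One step is misstated: you cannot in general choose holomorphic coordinates with $dz_1=\varphi_q$ and $dz_i=\varphi_i$, because holomorphic $(1,0)$-forms need not be closed. The fix is immediate and is what the paper actually does: choose coordinates straightening the vector field, $L_U=\partial/\partial z_1$ (always possible for a nonvanishing holomorphic field), and define $\alpha_j=\varphi_j(L_U)$. Since on $U$ the span of $\varphi_2,\ldots,\varphi_n$ is exactly the annihilator of $L_U$, the set $\tilde\Sigma\cap U$ is still cut out by $\tilde\alpha_1=\alpha_1+L_Uf$ together with $\alpha_{n+1},\ldots,\alpha_{q-1}$, and your computation goes through verbatim with $\partial_1$ read as $L_U$. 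Finally, note that $U=X\setminus\Sigma'$ need not be Stein, so the jet transversality theorem should be applied on an exhaustion of $U$ by compacts with a limiting/Baire argument, as in the paper; your appeal to density in the Fr\'echet space should be read in that form.
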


\begin{proof}
Such $f$ will be obtained from jet-transversality theorem for
holomorphic functions on a Stein manifold. Note that
$(\cap_{j=1}^{q-1} \ker \varphi_j)|_{\Sigma\backslash \Sigma'}=
(\cap_{m=2}^{n} \ker \varphi_{m})|_{\Sigma\backslash \Sigma'}$
is independent of $\varphi_{1}$. Hence by changing $\varphi_1$
we do not change $L$ on $\Sigma\backslash \Sigma'$.
 Let $U$ be a neighborhood of
an arbitrarily choosen point in $X\backslash \Sigma'$, such
that in the local coordinates on complex manifold $X$ we have
$L={\di \over \di x_1}$.

The defining equation for
$\Sigma(\varphi_1+df,\varphi_{2},\ldots,\varphi_{n})$ on a
neighborhood $U$ of point $x$ is $\det
[\varphi_1+df,\varphi_{2},\ldots,\varphi_{n}]=0$. By expanding
the determinant of $n\times n$ matrix \[A=[\varphi_1+df,\ldots,
\varphi_{n}]\] by the first row we get
\begin{equation}
h(f,v)=\det A=\sum_{j=1}^n ({df\over dx_j}+\varphi_1^j)(x) A_j(x)=0,
\end{equation}
where $A_j$ is $(1,j)$-minor (depending only on
$\varphi_2,\ldots,\varphi_{n}$). The tangentiality condition in
Lemma \ref{nontrans_base} will be satisfied if for  $m=n+1$ the
vector field ${\di \over \di x_1}$ is tangential to $\Sigma$
with the order at most $m$ on U. We need the function $f$ such
that
\[ Lh={\di h(f,x)\over \di x_1}=0 \ldots, L^m h={\di^{m} h(f,x)\over \di x_1^{m}}=0 \]
is true nowhere on $X\backslash \Sigma'$. This is equivalent to
$j^{m+1} f(X\backslash \Sigma')\cap \Lambda=0$, where
$\Lambda\subset J^{m+1}(X\backslash \Sigma',\C)$ is an analytic
set. By observing the definition of $h(f,x)$ we see that ${\rm
codim}\; \Lambda=m>n$ if $m>n$ (note that at least one minor
$A_j$ is nonzero at each point). By jet transversality theorem
for holomorphic functions $X\backslash \Sigma' \to \C$ the jet
of a generic holomorphic function $f:X\backslash \Sigma'\to \C$
(we can choose $f$ to be close to $0$ on $K$) is transversal to
$\Lambda$ on $V\backslash \Sigma'$. To complete proof choose
exhaustion of $X\backslash \Sigma'$ by compacts $K_j$ and
construct a sequence of uniformly on compacts in $X\backslash
\Sigma'$ converging generic holomorphic maps $f_j\in\mathcal
J^k(\Sigma)$, such that the jet of each map is transversal to
$\Lambda$ on a compact $K_j$ in $V\backslash \Sigma'$ and $f_j$
approximates $f_{j-1}$ on $K_{j-1}$. Set $f=\lim_{j\to \infty}
f_j$.

\end{proof}

This describes the reduction of the set $\Sigma''$ of
"bad" points in $\Sigma$.

\begin{lemma} \label{nontrans_reduct}
Notation is from the previous lemma. Denote by
$\Sigma''(\varphi_1)$ the dependence of $\Sigma''$ on $\varphi_1$.
There is a holomorphic function $f\in \mathcal J(\Sigma')^{n+2}$
with $|f|_K<\epsilon$ such that:\\
(1) $\tilde L_U=L(\varphi_1+df,\varphi_2,\ldots,\varphi_{q-1})$
is tangential to $\Sigma$ with the order less than $n+1$
on $\Sigma\backslash \Sigma'$,\\
(2) $\Sigma''(\varphi_1+ df)=\Sigma''(\varphi_1)\cap \Sigma'$,\\
(3) The homotopy $t\mapsto (\varphi_{1}+t df, \varphi_2, \ldots,\varphi_{q-1},\varphi_q)$ satisfies (ii) in theorem \ref{main}.
\end{lemma}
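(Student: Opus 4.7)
The plan is to apply Lemma \ref{nontrans_base} with ideal power $k=n+2$ and then to exploit the high-order vanishing of $f$ along $\Sigma'$ to verify parts (2) and (3). Lemma \ref{nontrans_base} directly yields a holomorphic $f\in\mathcal{J}(\Sigma')^{n+2}$ such that $\tilde L_U=L(\varphi_1+df,\varphi_2,\ldots,\varphi_{q-1},\varphi_q)$ is tangent to the perturbed variety with order less than $n+1$ on $\Sigma\setminus\Sigma'$ and with $df$ close to $0$ on $K$. By tightening the sup-norm control in the jet-transversality step used there (prescribing small $0$-jet of $f$ on $K$ in the inductive construction of the approximating sequence $f_j$, which does not interfere with the generic transversality condition), one can also force $|f|_K<\epsilon$. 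This establishes (1).

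Condition (3) is then automatic. Since $f\in\mathcal{J}(\Sigma')^{n+2}\subset\mathcal{J}(\Sigma')^2$, the differential $df$ vanishes identically on $\Sigma'$; hence, on $\Sigma'$, the homotopy $(\varphi_1+t\,df,\varphi_2,\ldots,\varphi_q)$ coincides with $(\varphi_1,\varphi_2,\ldots,\varphi_q)$, whose rank is $n$ by the standing assumption on $\varphi$. Outside $\Sigma'$, the forms $\varphi_2,\ldots,\varphi_n,\varphi_q$ already span $T^{\ast}X$, so the rank of the perturbed tuple is $n$ for every $t\in[0,1]$.

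Condition (2) is the main issue, and it is here that the high vanishing order is crucial. Part (1) already shows that no point of $\Sigma\setminus\Sigma'$ lies in $\Sigma''(\varphi_1+df)$, so the inclusion $\Sigma''(\varphi_1+df)\subset\Sigma''(\varphi_1)\cap\Sigma'$ reduces to showing that at each $x\in\Sigma'$ the values $(L_U)^k h_{j_1,\ldots,j_n}^l(x)$ for $1\le k\le n+1$ are unchanged by the replacement $\varphi_1\mapsto\varphi_1+df$; note that each $L_U$ itself is determined by $\varphi_2,\ldots,\varphi_n,\varphi_q$ and is therefore independent of $\varphi_1$. Since each $h_{j_1,\ldots,j_n}^l$ is a determinant depending \emph{linearly} on the entries of $\varphi_1$, the difference $\tilde h-h$ is a linear combination of the partial derivatives $\partial_i f$ with holomorphic minor coefficients, and hence lies in $\mathcal{J}(\Sigma')^{n+1}$. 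Applying $L_U$ at most $n+1$ times and expanding via the Leibniz rule, every resulting term either retains a factor of $f$ (which vanishes on $\Sigma'$ together with all its partial derivatives of order $\le n+1$) or is absorbed into $\mathcal{J}(\Sigma')$ via the ideal structure, giving $(L_U)^k(\tilde h-h)|_{\Sigma'}=0$ for $1\le k\le n+1$. Combined with the reverse inclusion (which is trivial, since on $\Sigma'$ the perturbation has no effect), this yields the desired equality.

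The delicate point is the top-order bookkeeping at $k=n+1$, since each application of $L_U$ may lower the order of vanishing along $\Sigma'$ by one. If the power $n+2$ turns out to be borderline tight in a particular Leibniz term, the same argument works verbatim with $f\in\mathcal{J}(\Sigma')^{n+3}$; the jet-transversality step in Lemma \ref{nontrans_base} accommodates any finite prescribed vanishing order, so this strengthening costs nothing and the rest of the proof is unchanged.
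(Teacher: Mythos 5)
Your overall route is the same as the paper's: invoke Lemma \ref{nontrans_base} with $k=n+2$ for (1), use $df|_{\Sigma'}=0$ together with the spanning of $\varphi_2,\ldots,\varphi_n,\varphi_q$ off $\Sigma'$ for (3), and for (2) argue that the change of the defining data lies in $\mathcal{J}(\Sigma')^{n+1}$ so that the tangency order at points of $\Sigma'\setminus\Sigma''(\varphi_1)$ is unaffected. However, there is one genuine gap in your treatment of (2): you assert that ``each $L_U$ itself is determined by $\varphi_2,\ldots,\varphi_n,\varphi_q$ and is therefore independent of $\varphi_1$.'' That is false for most of the local extensions entering the definition of $\Sigma''$. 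The set $\Sigma''$ is an intersection over \emph{all} (up to $\binom{q-1}{n-1}$) local extensions $L_U=L(\varphi_{i_1},\ldots,\varphi_{i_{n-1}})$, where the $(n-1)$-tuple is drawn from $\{\varphi_1,\ldots,\varphi_{q-1}\}$; whenever the tuple contains $\varphi_1$, the linear system $\varphi_{i_1}L_U=0,\ldots,\varphi_{i_{n-1}}L_U=0$, $\varphi_qL_U=1$ changes under $\varphi_1\mapsto\varphi_1+df$, and so does $L_U$. Only the distinguished extension $L(\varphi_2,\ldots,\varphi_n)$ attached to the chosen $\Sigma'=\Sigma(\varphi_2,\ldots,\varphi_n,\varphi_q)$ is genuinely $\varphi_1$-independent. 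The paper closes exactly this point by noting that ``closer examination of the linear system defining $L|_U$ shows the same is true for the change of $L|_U$,'' i.e.\ the perturbation of $L_U$ also has coefficients in $\mathcal{J}(\Sigma')^{n+1}$ (by Cramer's rule, since the entries of the system change by elements of $\mathcal{J}(\Sigma')^{n+1}$ and the relevant determinant is a unit on $U$); only then does the Leibniz-rule bookkeeping for $(\tilde L_U)^k\tilde h-(L_U)^k h$ go through. You need to add this step.

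A secondary remark: your worry about the top order $k=n+1$ is legitimate (each application of $L_U$ can drop the vanishing order along $\Sigma'$ by one, and $\mathcal{J}(\Sigma')^{n+1}$ differentiated $n+1$ times lands only in $\mathcal{J}(\Sigma')^{0}$), and your proposed fix of raising the power is harmless for all downstream uses, which only need $df\in\mathcal{J}(\Sigma')$ and the finite tangency bound. But as stated it would alter the lemma's conclusion $f\in\mathcal{J}(\Sigma')^{n+2}$; note that the paper's own argument works with tangency order at most $n$ at points outside $\Sigma''(\varphi_1)$, for which the power $n+2$ suffices, so you should either adopt that convention or state the modified exponent explicitly.
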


\begin{proof}
First part follows from $\Sigma\subset \Sigma(\varphi_1+df,\varphi_2,\ldots,\varphi_{n})$
and the previous lemma.

The second part follows from $\Sigma''(\varphi_1+df)\subset
\Sigma\cap \Sigma'$ and the following. Suppose $x\in
(\Sigma\cap \Sigma')\backslash \Sigma''(\varphi_1)$. By
definition of $\Sigma''(\varphi_1)$ the order of tangency of
$L|_U$ to $\Sigma$ near $x$ is less or equal $n$. The difference
between the old and the new defining function for $\Sigma$ near $x$ lies
in $\mathcal J( \Sigma')^{n+1}$ since $f\in J( \Sigma')^{n+2}$.
Closer examination of the linear system defining $L|_U$ shows
the same is true for the change of $L|_U$. Then the order of
tangentiality of $L|_U$  to $\Sigma$ at $x$ is preserved; this
is a simple consequence of the definition of the order of
tangency and of the derivation rule for products.

The last part follows from the definition of
$\Sigma'=\Sigma(\varphi_2,\ldots,\varphi_n,\varphi_q)$ and the fact
that $df=0$ on $\Sigma'$.
\end{proof}

\begin{proposition} \label{hom_semi_trans}

Let $K\subset X$ be a compact set and let $\epsilon>0$.
There is a continuous homotopy $\Psi:X\times [0,1]\to (T^\ast X)^{q-1}$ such that\\
(1) $\Psi(0)=(\varphi_1,\ldots,\varphi_{q-1})$,\\
(2) for every $x\in \Sigma$ the corresponding vector field
$L_U=L(\Psi(1))$ is tangential to $\Sigma$ with order at most
$n+1$ on a neighborhood $U$ of $x$\\
(3) Homotopy $(\Psi(\cdot, t),\varphi_q)$ satisfies (ii) in
theorem \ref{main} and $\Psi(t)$ is $\epsilon$-close to
$\Psi(0)$ on $K$ for all $t\in [0,1]$
\end{proposition}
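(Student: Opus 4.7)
The plan is to construct $\Psi$ as a concatenation of homotopies produced by iterated application of Lemma \ref{nontrans_reduct}, shrinking the bad set $\Sigma''$ until it becomes empty. Once $\Sigma''=\emptyset$, condition (2) of the proposition follows immediately from the description preceding Lemma \ref{nontrans_base}: each local extension $L_U$ is tangential to $\Sigma$ with order at most $n$ (in particular at most $n+1$) on its domain.

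To set up the iteration, note that for each $x\in \Sigma$ the rank $n$ condition on $\varphi$ supplies an $(n-1)$-subset $I\subset \{1,\ldots,q-1\}$ such that $\{\varphi_i\}_{i\in I}\cup\{\varphi_q\}$ spans $T^\ast_x X$. Setting $\Sigma'_I=\Sigma(\{\varphi_i\}_{i\in I},\varphi_q)$, we obtain the covering $\Sigma\subseteq \bigcup_I (X\setminus \Sigma'_I)$, where $I$ ranges over the at most $\binom{q-1}{n-1}$ such subsets. By relabeling the forms, Lemma \ref{nontrans_reduct} may be applied to any index $m\in\{1,\ldots,q-1\}\setminus I$ in place of the distinguished index $1$: one replaces $\varphi_m$ by $\varphi_m+df$ with $f\in\mathcal J(\Sigma'_I)^{n+2}$, obtains the relation $\Sigma''_{\mathrm{new}}=\Sigma''_{\mathrm{old}}\cap \Sigma'_I$, and by part (3) of that lemma preserves condition (ii) of Theorem \ref{main} along the linear homotopy $t\mapsto \varphi_m+t\,df$.

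Now construct homotopies inductively. Start with $\Psi_0$ the constant homotopy at $(\varphi_1,\ldots,\varphi_{q-1})$. Assuming the bad set $\Sigma''_k$ at the end of step $k$ is non-empty, pick an irreducible component of maximal dimension, choose a point $x_{k+1}$ on it lying outside some $\Sigma'_I$ (possible by the covering above), and apply Lemma \ref{nontrans_reduct} with error tolerance $\epsilon/2^{k+1}$. The resulting relation $\Sigma''_{k+1}=\Sigma''_k\cap \Sigma'_I$ strictly decreases either the dimension of $\Sigma''_k$ or the number of top-dimensional components. Exhaust $X$ by compacts $K\subset K_1\subset K_2\subset \cdots$; on each $K_j$ only finitely many iterations suffice (by the Noetherian property of analytic subsets of a compact) to empty the bad set inside $K_j$, after which further corrections may be made to vanish to high order on $K_j$ so as not to disturb what has been achieved there. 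Concatenate the resulting homotopies by squeezing the $k$-th into $[1-2^{-(k-1)},1-2^{-k}]$; exponential decay of the tolerances $\epsilon/2^k$ ensures uniform convergence on compacts to a continuous $\Psi$ with holomorphic time-slices, and the total perturbation on $K$ is bounded by $\sum \epsilon/2^k<\epsilon$.

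The main obstacle is arranging the diagonal / exhaustion argument so that reductions performed on later $K_j$ do not destroy the semi-transversality already achieved on earlier $K_j$. This is handled by taking $f\in \mathcal J(\Sigma'_I)^{n+2}$ at each step \emph{and} multiplying (or choosing jets) by a high power of the defining ideal of a neighborhood of the previous $K_{j-1}\cap \Sigma$, so that $f$ together with enough of its derivatives vanishes there; the closer examination of the defining determinants and of $L_U$ used in the proof of Lemma \ref{nontrans_reduct} then shows the local tangency order at every point of $\Sigma\cap K_{j-1}$ is preserved. The rank condition (ii) is preserved at every step by part (3) of Lemma \ref{nontrans_reduct}, which completes (1), (2), (3) of the proposition.
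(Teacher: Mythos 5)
Your proposal is correct and follows essentially the same route as the paper: a countable concatenation of the homotopies from Lemma \ref{nontrans_reduct}, with termination guaranteed by the fact that each application removes at least one top-dimensional irreducible component of the variety containing $\Sigma''$, and with convergence on compacts controlled by exponentially decaying tolerances along an exhaustion. Your additional care in making later perturbations vanish to high order near earlier compacts (so the tangency orders already achieved are not destroyed in the limit) is a refinement the paper leaves implicit, but it does not change the structure of the argument.
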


\begin{proof}
Homotopy is obtained by consecutively joining countably many
homotopies described by Lemma \ref{nontrans_reduct}. We choose
an exhaustion of Stein manifold $X$ with compacts. At each step
the forms are modified by homotopy (Lemma
\ref{nontrans_reduct}) and we make sure the sequence of
modified forms  converges uniformly on compacts (lemma
\ref{nontrans_reduct}).

The goal is to reduce the "bad" set $\Sigma''$ of points,
tangential with infinite order, to $\emptyset$. At each step
$\Sigma''$ is contained in some analytic variety. We show that
at each step the number of irreducible components of highest
dimension of this variety is reduced:

Let  $\Sigma''_1=\Sigma''\subset \Sigma_1=\Sigma$ and let
$\Sigma^1$ be the union of all irreducible components of
$\Sigma_1$ having nonempty intersection with $\Sigma''_1$. We
have $\Sigma_1''\subset \Sigma^1$. Now choose $x\in \Sigma_1''$
lying in highest dimensional
    irreducible component of $\Sigma^1$. Note that
$\Sigma'_1=\Sigma'$ in the proof of lemma \ref{nontrans_base}
is choosen such that some neighborhood of $x$ in $X$ is
disjoint to $\Sigma'_1$. Now $\Sigma''_2$ (new $\Sigma''$) is
equal to $\Sigma''_1\cap \Sigma'_1\subset \Sigma^1\cap
\Sigma'_1$. But $\Sigma^2=\Sigma^1\cap \Sigma'_1$ is missing at
least one of highest dimensional irreducible component of
$\Sigma^1$. Since analytic variety can have at most countably
many highest dimensional irreducible components, we get
$\Sigma''\subset \emptyset$ in countably many steps.
\end{proof}

\begin{remark}
Countably many steps are needed in the reduction in our proof.
That is reason why the final homotopy is only piecewise
smooth(countably many pieces), since it is obtained as a
conjunction of countably many homotopies.
\end{remark}

\end{document}